\newtheorem{thm}{Theorem}[section]
\newtheorem{cor}[thm]{Corollary}
\newtheorem{lem}[thm]{Lemma}
\newtheorem{prop}[thm]{Proposition}
\theoremstyle{definition}
\newtheorem{exam}[thm]{Example}
\DeclareMathOperator{\m}{m}
\begin{document}

\title[Coverings, Matchings and the number of maximal independent sets of  graphs]{Coverings, Matchings and the number of maximal independent sets of  graphs}

\author{Do  Trong Hoang}

\email{dthoang@math.ac.vn}
\address{Institute of Mathematics, VAST, 18 Hoang Quoc Viet, 10307 Hanoi, Vietnam}

\author{Tran Nam Trung}

\email{tntrung@math.ac.vn}
\address{Institute of Mathematics, VAST, 18 Hoang Quoc Viet, 10307 Hanoi, Vietnam}

\subjclass[2010]{05C30, 05D15}
\keywords{Independent set, Matching number, Covering number}
\thanks{}
\date{}
\dedicatory{}
\commby{}
\begin{abstract} We determine the maximum number of maximal independent sets of arbitrary graphs in terms of their covering numbers and we completely characterize the extremal graphs. As an application, we give a similar result for   K\"onig-Egerv\'ary graphs in terms of their matching numbers.
\end{abstract}
\maketitle
\section{Introduction}

Let $\m(G)$ be the number of maximal independent sets of a simple graph $G$. Around $1960$, Erd\"os and Moser raised the problem of determining the largest number of  $\m(G)$ in terms of   order of $G$,  which we shall denote by $n$ in this paper, and determining the extremal graphs. In $1965$, Moon and Moser \cite{MM} solved this problem for any simple graph.

This problem now has been  focused in  investigating various classes of graphs such as: for connected graphs by F\"uredi \cite{Fu}; and   independently Griggs et al. \cite{GGG}; for triangle-free graphs by Hujter and Tuza \cite{HuTu} and for connected triangle-free graphs by Chang and Jou \cite{CJ};  Sagan and Vatter \cite{SV} and Goh et al. \cite{YMSV} solved the problem for graphs with at most $r$ cycles; for connected unicyclic graphs by   Koh et al. \cite{KGD}; for trees independently by   Cohen \cite{Co}, Griggs and Grinstead \cite{GG},  Sagan \cite{Sa},  Wilf \cite{W}; for bipartite graphs by Liu \cite{Liu} and bipartite graphs with at least one cycle by Li et al. \cite{LZZ}.

The goal of this paper is to determine the maximum number of $\m(G)$ of arbitrary simple graph $G$ in terms of  its covering number, denoted by $\beta(G)$; and to characterize the extremal graphs.  On that basis we will consequently improve some certain results among those mentioned above. Before stating our results, recall that a matching in $G$ is a set of edges, no two of which meet a common vertex.  The  {\it matching number}  $\nu(G)$ of $G$ is the maximum size of matchings of $G$.   An  induced matching $M$ in a graph $G$ is a matching where no two edges of $M$ are joined by an edge of $G$. The  {\it induced matching number}  $\nu_0(G)$ of $G$ is the maximum size of induced matchings of $G$. We always have $\nu_0(G)\le \nu(G)$; and if $\nu_0(G) =  \nu(G)$ then $G$ is called a {\it Cameron-Walker} graph according to  Hibi et al. \cite{HHKO}.  The main result of the paper is  as follows:

\medskip

\noindent {\bf   Theorem  {\rm (Theorem \ref{upperbound2} and Theorem \ref{CaWa})}. }{\it  Let $G$ be a  graph. Then   $\m(G) \le  2^{\beta(G)}$, and the equality holds  if and only if
   $G$ is     a Cameron-Walker  bipartite graph.
 }
\medskip

A graph $G$ is called a {\it K\"onig-Egerv\'ary graph} if  the matching number is equal to the covering number that is $\beta(G)=\nu(G)$. As an application, we determine the maximum number of $\m(G)$ for K\"onig-Egerv\'ary graphs $G$,  and  characterize the extremal graphs.

\medskip
\noindent {\bf   Corollary \ref{Konig}.} {\it  Let $G$ be a K\"onig-Egerv\'ary graph. Then
$$\m(G) \le 2^{\nu(G)},$$
and the equality  holds  if and only if $G$ is a Cameron-Walker bipartite graph. }
\medskip

It is well-known that all bipartite graphs are K\"onig-Egerv\'ary (see \cite[Theorem 8.32]{BM}).  In general, $\nu(G) \leqslant \lfloor \frac{n}{2} \rfloor$, where $n$ is the order of $G$. Thus Corollary $\ref{Konig}$ improves the main result of Liu (see \cite[Theorem 2.1]{Liu}) for bipartite graphs.
\medskip

\section{Bounds for $m(G)$}

We now recall some basic concepts and terminology from graph theory (see \cite{BM}).  Let $G$ be a simple graph with vertex set $V(G)$ and edge set $E(G)$. An edge $e \in E(G)$ connecting two vertices $x$ and $y$ will be also written as $xy$ (or $yx$).  For a subset $S$ of $V(G)$, we denote by $G[S]$  the induced subgraph of $G$ on the vertex set $S$; and denote $G\setminus S$ by $G[V(G)\setminus S]$. The neighborhood of  $S$ in $G$ is the set  $$N_G(S) := \{y\in V(G) \setminus S\mid xy\in E(G) \text{ for some }x\in S\},$$
the close neighborhood of $S$ is $N_G[S] := S \cup N_G(S)$  and the localization of $G$ with respect to $S$ is  $G_S:=G\setminus N_G[S]$. If $S=\{x\}$,  we write  $N_G(x)$    (resp. $N_G[x]$, $G_x$, $G\setminus x$) instead of   $N_G(\{x\})$ (resp.  $N_G[\{x\}]$, $G_{\{x\}}$, $G\setminus \{x\}$). The number $\deg_G(x):=|N_G(x)|$ is called the {\it degree} of  $x$ in  $G$.
A vertex in $G$ of degree zero is called an {\it isolated vertex} of $G$.   A vertex $x$ of $G$ is called    {\it leaf adjacent to} $y$ if $\deg_G(x) = 1$ and $xy$ is an edge of $G$.   A complete graph with $n$ vertices is denoted by $K_n$.  A graph $K_3$ is called  {\it  triangle}.
 The union of two disjoint graphs $G$ and $H$ is the graph $G \cup H$ with vertex set $V (G \cup H) = V (G) \cup V (H)$ and edge set $E(G \cup H) = E(G) \cup E(H)$. The union of $t$ copies of disjoint graphs isomorphic to $G$ is  denoted by $tG$, where $t$ is a positive integer.

A graph is called {\it totally disconnected} if  it is either  a null graph or containing no edge.  Thus, $\m(G) = 1$ whenever $G$ is  totally disconnected. The following basic lemmas on determining $\m(G)$ for arbitrary graph $G$ will be frequently used later.

\begin{lem} {\rm \cite[Lemma 1]{HuTu}} \label{HuTu} Let $G$ be a graph. Then
\begin{enumerate}
\item  $\m(G) \le \m(G_x) +\m(G\setminus x)$, for any vertex $x$ of $G$.
\item If $x$ is a leaf adjacent to $y$ of $G$, then    $\m(G) = \m(G_x) +\m(G_y)$.
\item If $G_1,\ldots, G_s$ are connected components of   $G$, then
$$\m(G) = \prod_{i=1}^s\m(G_i).$$
\end{enumerate}
\end{lem}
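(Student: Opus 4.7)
The plan is to prove each part by partitioning the maximal independent sets of $G$ according to whether they contain certain vertices, and then setting up the appropriate bijection with maximal independent sets of the auxiliary graphs $G_x$, $G\setminus x$, and $G_y$. Throughout, I will use the following two elementary observations, which I would verify up front: first, if $S$ is a maximal independent set of $G$ and $x\in S$, then $S\setminus\{x\}$ is an independent set in $G_x=G\setminus N_G[x]$, and is in fact maximal there (any $z\in V(G_x)\setminus(S\setminus\{x\})$ has, by maximality of $S$, some neighbor in $S$, and that neighbor cannot be $x$ since $z\notin N_G(x)$, hence lies in $S\setminus\{x\}\subseteq V(G_x)$). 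Second, conversely, if $T$ is maximal independent in $G_x$, then $T\cup\{x\}$ is maximal independent in $G$, because every vertex outside $T\cup\{x\}$ is either dominated by $T$ in $G_x$ or lies in $N_G(x)$ and is therefore dominated by $x$. These two observations give a bijection between maximal independent sets of $G$ containing $x$ and maximal independent sets of $G_x$.

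For part (1), I would then write the set of maximal independent sets of $G$ as the disjoint union of those containing $x$ and those not containing $x$. The first class is in bijection with the maximal independent sets of $G_x$ by the paragraph above, contributing exactly $\m(G_x)$. For the second class, I would show that if $S$ is maximal independent in $G$ with $x\notin S$, then $S$ is maximal independent in $G\setminus x$ (removing the vertex $x$ cannot destroy maximality since any vertex $y\neq x$ outside $S$ still has the same dominating neighbor in $S$, which is different from $x$). This gives an injection, so the second class has at most $\m(G\setminus x)$ elements, yielding the bound $\m(G)\le \m(G_x)+\m(G\setminus x)$. The only reason equality can fail here is that a maximal independent set of $G\setminus x$ with no neighbor of $x$ would not be maximal in $G$ (one could adjoin $x$).

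For part (2), assume $x$ is a leaf with unique neighbor $y$. The key observation is that every maximal independent set $S$ of $G$ must contain exactly one of $x$ and $y$: it cannot contain both because $xy\in E(G)$, and it cannot omit both because $x$'s only neighbor is $y$, so $S\cup\{x\}$ would remain independent, contradicting maximality. Applying the bijection from the first paragraph once to $x$ and once to $y$, the maximal independent sets containing $x$ are in bijection with those of $G_x$, and those containing $y$ with those of $G_y$. Summing the two disjoint cases gives the exact equality $\m(G)=\m(G_x)+\m(G_y)$.

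For part (3), independence and maximality are purely local properties, so a subset $S\subseteq V(G)$ is a maximal independent set of $G$ if and only if $S\cap V(G_i)$ is a maximal independent set of $G_i$ for each connected component $G_i$; this gives a bijection between maximal independent sets of $G$ and tuples in $\prod_{i=1}^s\mathrm{MaxInd}(G_i)$, hence the product formula. The main subtlety in the whole lemma, and the only step where one must be careful, is checking maximality (not just independence) in both directions of the bijections in parts (1) and (2); once that bookkeeping is done, the three statements follow with no further work.
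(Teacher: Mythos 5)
Your proof is correct and complete. Note that the paper itself gives no proof of this lemma --- it is quoted verbatim from Hujter and Tuza \cite[Lemma 1]{HuTu} --- so there is nothing to compare against; your argument (the bijection between maximal independent sets of $G$ containing $x$ and maximal independent sets of $G_x$, the injection of those avoiding $x$ into the maximal independent sets of $G\setminus x$, the observation that a leaf forces every maximal independent set to contain exactly one of $x$ and $y$, and the componentwise product decomposition) is the standard one and all the maximality checks are carried out correctly.
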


\begin{lem}   \label{W}   If $H$ is an induced subgraph of $G$, then $\m(H) \le  \m(G)$.
\end{lem}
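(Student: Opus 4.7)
The plan is to construct an injection from the family $\mathcal{M}(H)$ of maximal independent sets of $H$ into the family $\mathcal{M}(G)$ of maximal independent sets of $G$. Write $S = V(H)$ and $T = V(G)\setminus S$, so that $H = G[S]$. The idea is that every maximal independent set of $H$ can be extended canonically to a maximal independent set of $G$ by adding only vertices from $T$, and the restriction of that extension back to $S$ recovers the original set.

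Concretely, given $I \in \mathcal{M}(H)$, I would choose $J \subseteq T$ maximal with the property that $I \cup J$ is independent in $G$ (such a $J$ exists by a straightforward greedy/Zorn argument, since the empty set works and the collection is finite). Set $\phi(I) := I \cup J$. Then I need to check two things: first, that $\phi(I) \in \mathcal{M}(G)$; second, that $\phi$ is injective.

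For the first point, $\phi(I)$ is independent in $G$ by construction, so I only need maximality. Take any $v \in V(G)\setminus \phi(I)$. If $v \in T$, then by the maximality of $J$, the set $I \cup (J \cup \{v\})$ fails to be independent in $G$, so $v$ has a neighbor in $\phi(I)$. If instead $v \in S\setminus I$, then since $I$ is a maximal independent set of $H = G[S]$, there is some $u \in I$ with $uv \in E(H) \subseteq E(G)$, and $u \in \phi(I)$. Either way $\phi(I) \cup \{v\}$ is not independent, so $\phi(I)$ is a maximal independent set of $G$.

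For injectivity, observe that $\phi(I) \cap S = I$: the inclusion $\supseteq$ is clear, and $\subseteq$ holds because $\phi(I)\setminus I \subseteq T$ is disjoint from $S$. Hence $I$ can be read off from $\phi(I)$, so $\phi$ is injective and $\m(H) = |\mathcal{M}(H)| \le |\mathcal{M}(G)| = \m(G)$. There is no real obstacle here; the only subtlety is recognizing that the extension must be done using vertices of $T$ alone, so that the intersection with $S$ remains exactly $I$ and the map stays injective.
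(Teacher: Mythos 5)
Your proof is correct. The paper actually states this lemma without any proof (treating it as a well-known fact), and your extension argument --- greedily enlarging a maximal independent set $I$ of $H$ by vertices of $V(G)\setminus V(H)$ only, then recovering $I$ as $\phi(I)\cap V(H)$ --- is the standard justification and is complete; the one place where inducedness is essential, namely that $I$ is already independent in $G$ so the greedy extension can start from $J=\emptyset$, is present in your argument, though it deserves to be flagged explicitly since the statement fails for non-induced subgraphs.
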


We first give an upper bound for $\m(G)$ in terms of $\nu(G)$, and the extremal graphs. However, that upper bound does not cover the result of Moon and Moser \cite{MM}.

\begin{prop} Let $G$ be a graph. Then, $\m(G)\le 3^{\nu(G)}$  and the equality holds if and only if $G\cong sK_3\cup tK_1$, where $s=\nu(G)$ and $t=|V(G)|-3s$.
\end{prop}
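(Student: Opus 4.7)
The plan for the inequality $\m(G)\le 3^{\nu(G)}$ is induction on $|V(G)|$, with the trivial base $\nu(G)=0$. For the step, choose any edge $xy$ of $G$ and apply Lemma~\ref{HuTu}(1) first at $x$, then at $y$ inside $G\setminus x$:
\[
\m(G)\le \m(G_x)+\m(G\setminus x)\le \m(G_x)+\m(G_y)+\m(G\setminus\{x,y\}),
\]
using the identity $(G\setminus x)_y=G_y$, which holds because $xy$ is an edge. Since the edge $xy$ is disjoint from each of $V(G_x)$, $V(G_y)$, and $V(G\setminus\{x,y\})$, it can be appended to any matching of those graphs; hence $\nu(G_x),\nu(G_y),\nu(G\setminus\{x,y\})\le \nu(G)-1$, and induction gives $\m(G)\le 3\cdot 3^{\nu(G)-1}=3^{\nu(G)}$.

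For the equality characterization, one direction is immediate from Lemma~\ref{HuTu}(3): $\m(sK_3\cup tK_1)=\m(K_3)^s\cdot \m(K_1)^t=3^s=3^{\nu(G)}$. Conversely, suppose $\m(G)=3^{\nu(G)}$. By Lemma~\ref{HuTu}(3) together with the additivity of $\nu$ over components, each component $C$ must also satisfy $\m(C)=3^{\nu(C)}$; isolated vertices account for the $K_1$ summands, and it remains to show that every nontrivial connected component $C$ with $\m(C)=3^{\nu(C)}$ equals $K_3$. If $C$ had a leaf $x$ adjacent to $y$, Lemma~\ref{HuTu}(2) would give $\m(C)=\m(C_x)+\m(C_y)\le 2\cdot 3^{\nu(C)-1}<3^{\nu(C)}$, a contradiction, so $C$ must have minimum degree at least~$2$.

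I then induct on $|V(C)|$, the base $|V(C)|=3$ forcing $C=K_3$. Assume $|V(C)|\ge 4$ and pick any edge $xy$. Equality throughout the three-way split, combined with the inductive hypothesis, forces $C_x$, $C_y$, and $C\setminus\{x,y\}$ each to be a disjoint union of exactly $\nu(C)-1$ triangles together with some isolated vertices. Writing $T$ for the isolated vertices of $C\setminus\{x,y\}$, the minimum-degree hypothesis on $C$ pins $T$ down as exactly the set of common neighbors of $x$ and $y$ of degree~$2$ in $C$. The crux is to derive $N_C(x)\setminus\{y\}\subseteq T$: since $C_x$ arises from $C\setminus\{x,y\}$ by deleting $A:=N_C(x)\setminus\{y\}$, and since vertex deletion can only destroy triangles (never create them), preservation of all $\nu(C)-1$ triangles forces $A$ to miss every triangle, whence $A\subseteq T$. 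Symmetrically, $N_C(y)\setminus\{x\}\subseteq T$. Consequently the triangles of $C\setminus\{x,y\}$ have no edges to $\{x,y\}\cup T$ and form separate components of $C$; by connectedness $\nu(C)-1=0$. Then $V(C)=\{x,y\}\cup T$, while $\nu(C)=1$ combined with minimum degree $\ge 2$ forces $|T|=1$, so $C=K_3$, contradicting $|V(C)|\ge 4$.

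The main obstacle is the structural step $A\subseteq T$, which is what unlocks the connectivity contradiction; the key insight is that once the inductive hypothesis classifies $G_x$ and $C\setminus\{x,y\}$ as disjoint unions of triangles and isolated vertices, matching the triangle counts forces the neighbors of $x$ (other than $y$) to land entirely among the common neighbors of $x$ and $y$.
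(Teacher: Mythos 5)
Your proof is correct. The inequality part is exactly the paper's argument: the three-way recursion $\m(G)\le \m(G_x)+\m(G_y)+\m(G\setminus\{x,y\})$ via $(G\setminus x)_y=G_y$, with $\nu$ dropping by one in each piece. For the equality characterization both you and the paper start from the same consequence of equality, namely that $G_x$, $G_y$ and $G\setminus\{x,y\}$ are each $(\nu(G)-1)K_3$ plus isolated vertices, but you finish differently. The paper first disposes of the base case $\nu(G)=1$ by a direct case analysis, then shows the three subgraphs share the \emph{same} $(\nu(G)-1)K_3$, that this part splits off from $G$ as a disjoint union, and factors $\m(G)=\m(H)\cdot 3^{\nu(G)-1}$ to reduce the remainder $H$ to the $\nu=1$ case. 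You instead reduce to connected components, use Lemma \ref{HuTu}(2) to exclude leaves and get minimum degree at least $2$, and then prove the inclusion $N_C(x)\setminus\{y\}\subseteq T$ (deleting $N_C(x)\setminus\{y\}$ from $C\setminus\{x,y\}$ must preserve all $\nu(C)-1$ triangles, so those neighbours avoid every triangle), after which connectivity forces $\nu(C)=1$ and the degree condition forces $C=K_3$. Both routes are sound; the paper's factorization is a quicker way to peel off the triangles, while your connectivity-plus-degree argument avoids the explicit $\nu=1$ case analysis at the cost of the extra leaf-exclusion step. One presentational point: your ``inner'' induction on $|V(C)|$ actually invokes the full proposition for the (possibly disconnected) graphs $C_x$, $C_y$, $C\setminus\{x,y\}$, so you should organize the whole argument as a single induction on the number of vertices rather than as two nested inductions.
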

\begin{proof}  We  prove the proposition by induction on $\nu(G)$.  If $\nu(G)=0$, then $G$ is totally disconnected, and then the assertion is trivial.

If $\nu(G) = 1$, let $xy$ be an edge of $G$ and let $S := V(G) \setminus \{x,y\}$. Then $G[S]$ is totally disconnected and if we have two vertices in $S$, say $u$ and $v$, such that $xu$ and $yv$ are edges of $G$, then $u \equiv v$. In particular, there is at most one vertex in $S$ that is adjacent to both $x$ and $y$. We now consider three cases:

\medskip

{\it Case 1}: $x$  and $y$ are not adjacent to any vertex in $S$. In this case, we have $\m(G) = 2$, and the proposition holds.

\medskip

{\it Case 2}: $x$  is not adjacent to any vertex in $S$ and $y$ is adjacent to some vertices in $S$.   Then, we have $\m(G) = 2$, and the  proposition holds.

\medskip

{\it Case 3:} There is a vertex in $S$, say $z$, that is adjacent to both $x$ and $y$. In this case, every other vertex of $S$ is not adjacent to either $x$ or $y$. Thus, $G=K_3\cup tK_1$,  where $t = |V(G)| - 3$ and $\m(G) = 3 = 3^{\nu(G)}$.  Therefore, the proposition is proved in this case.

\medskip

Assume that  $\nu(G) \geqslant 2$. Let $xy$ be an edge of $G$. Since both $x$ and $y$ are not vertices of  the following graphs: $G_x$, $G_y$ and $G\setminus \{x,y\}$,  we deduce that
$$\nu(G_x) \leqslant  \nu(G)-1, \  \nu(G_y) \leqslant  \nu(G)-1 \ \text{ and } \nu(G \setminus \{x,y\}) \le \nu(G)-1.$$
Thus, by the induction hypothesis, we obtain
$$\m(G_x) \leqslant 3^{\nu(G)-1} , \ \m(G_y) \leqslant 3^{\nu(G)-1}    \text{ and }  \ \m(G\setminus \{x, y\}) \leqslant 3^{\nu(G)-1}.$$
Note that $(G\setminus x)_y = G_y$. Combining with Lemma \ref{HuTu}, we obtain
\begin{eqnarray*}
\m(G)&\le& \m(G_x)+\m(G\setminus x) \\
&\le&  \m(G_x)+\m(G_y)+\m(G\setminus \{x, y\}) \\
&\le& 3^{\nu(G)-1} + 3^{\nu(G)-1} + 3^{\nu(G)-1}
=3^{\nu(G)}.
\end{eqnarray*}
This proves the first conclusion of the proposition. The equality $\m(G) = 3^{\nu(G)}$ occurs if and only if
$$\m(G) = \m(G_x)+\m(G\setminus x) , \ \m(G_x)=\m(G_y) = \m(G\setminus \{x,y\})=3^{\nu(G)-1}$$
and
$$\nu(G_x)=\nu(G_y) = \nu(G\setminus \{x,y\})=\nu(G)-1.$$

If $G = sK_3 \cup tK_1$, then $s = \nu(G)$ and $\m(G) = 3^{\nu(G)}$. Therefore, the necessary condition of the second conclusion of the proposition is followed. Now, it remains to prove that if $\m(G) =3^{\nu(G)}$ then $G \cong sK_3 \cup tK_1$.

Indeed, by the induction hypothesis, it follows that  $G_x,  G_y$ and $G\setminus \{x,y\}$ have the same component without isolated vertices, that is $(s-1)K_3$, where $s=\nu(G)$. In particular, $x$ and $y$ are not adjacent to any vertex of $(s-1)K_3$. Let $H$ be an induced subgraph of $G$ on the vertex set $V(G) \setminus V((s-1)K_3)$. Then, $H$ and $(s-1)K_3$ are disjoint subgraphs of $G$. By Lemma \ref{HuTu}, we imply $\m(G) = \m(H)\m((s-1)K_3) = \m(H) 3^{s-1}$. Since $\m(G) = 3^s$, $\m(H) = 3$. Note that $\nu(H) = 1$, so the induction hypothesis  again yields $H = K_3 \cup t K_1$. Thus, $G=sK_3 \cup tK_1$. The proof   is complete.
\end{proof}

 The following lemma gives  a lower bound for $\m(G)$ in terms of the induced matching number $\nu_0(G)$.

\begin{lem}\label{lowerbound} Let $G$ be a   graph. Then, $\m(G)\ge 2^{\nu_0(G)}$.
\end{lem}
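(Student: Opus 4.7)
The plan is to prove the lower bound directly by exhibiting $2^{\nu_0(G)}$ distinct maximal independent sets of $G$, indexed by the subsets of an induced matching of maximum size.

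Fix an induced matching $M=\{x_1y_1,\ldots,x_ky_k\}$ of $G$ with $k=\nu_0(G)$, and set $V(M)=\{x_1,y_1,\ldots,x_k,y_k\}$. By the definition of an induced matching, the induced subgraph $G[V(M)]$ contains exactly the $k$ edges of $M$; in particular the pairs $\{x_i,y_i\}$ are pairwise non-adjacent in $G$. For each subset $A\subseteq\{1,\ldots,k\}$, define
\[
I_A := \{x_i \mid i\in A\}\cup\{y_i \mid i\notin A\}.
\]
Since $I_A$ meets $G[V(M)]$ in an independent set (it picks exactly one endpoint from each edge of $M$ and these choices are pairwise non-adjacent), $I_A$ is an independent set of $G$. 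Extend $I_A$ to a maximal independent set $J_A$ of $G$.

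The crux of the argument is the identity $J_A\cap V(M)=I_A$, which gives injectivity of $A\mapsto J_A$. Indeed, $I_A\subseteq J_A\cap V(M)$ holds by construction. Conversely, a vertex of $V(M)\setminus I_A$ is of the form $y_i$ with $i\in A$ (in which case $x_i\in I_A\subseteq J_A$, and $x_iy_i\in E(G)$ forbids $y_i\in J_A$) or of the form $x_i$ with $i\notin A$ (symmetrically forbidden by $y_i\in J_A$). Hence $J_A\cap V(M)=I_A$, and distinct subsets $A$ produce distinct maximal independent sets $J_A$. Therefore $\m(G)\ge 2^k=2^{\nu_0(G)}$.

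The only real subtlety is the injectivity step, i.e.\ verifying that the extension to maximality cannot swap the chosen endpoint of any matching edge; this is exactly where the \emph{induced} (rather than merely matching) property is used, together with the independence constraint on $J_A$. Everything else is bookkeeping. I expect no need for an inductive argument, since the direct construction already yields the sharp bound.
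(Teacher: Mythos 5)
Your proof is correct and takes essentially the same route as the paper: the paper sets $H=G[V(M)]$, observes that $H$ is a disjoint union of $\nu_0(G)$ edges so $\m(H)=2^{\nu_0(G)}$, and cites Lemma \ref{W} ($\m(H)\le \m(G)$ for induced subgraphs), while your extend-and-restrict construction is exactly the standard proof of that lemma specialized to this $H$. One minor remark: the induced-matching hypothesis is what guarantees each $I_A$ is independent in $G$; the injectivity step itself only uses that each $x_iy_i$ is an edge and that $J_A$ is independent.
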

\begin{proof} Let  $\{x_1y_1,\ldots,x_ry_r\}$ be an  induced matching of $G$, where  $r=\nu_0(G)$.
Set $H:=G[\{x_1,\ldots,x_r,y_1,\ldots,y_r\}]$.  By Lemma \ref{W}, $\m(G)\ge \m(H)=2^{\nu_0(G)}$.
\end{proof}

Recall that a vertex cover of $G$ is a subset $C$ of $V(G)$ such that for each  $xy \in E(G)$, we have either $x\in C$ or $y\in C$. The {\it covering number}  of $G$,  denoted by $\beta(G)$,  is the  minimum size of  vertex covers   of $G$. From this definition, the following two lemmas  are obvious.

\begin{lem}\label{inducedbeta} Let  $H$ be  an induced subgraph  of $G$. Then,
\begin{enumerate}
\item  If $S$ is a vertex cover of $G$, then $S\cap V(H)$ is a vertex cover of $H$.
\item  $\beta(H)\le \beta(G)$.
\end{enumerate}
\end{lem}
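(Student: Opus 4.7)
The plan is straightforward because the two claims are really just unwinding the definitions of ``induced subgraph'' and ``vertex cover'', and the paper itself flags the lemma as obvious. I would prove part~(1) first and then deduce part~(2) as a one-line corollary by applying~(1) to a minimum vertex cover of $G$.

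For part~(1), the key observation is that if $H$ is an induced subgraph of $G$, then every edge of $H$ is also an edge of $G$. So pick an arbitrary edge $xy \in E(H)$; then $xy \in E(G)$, and since $S$ is a vertex cover of $G$ we must have $x \in S$ or $y \in S$. Because $x, y \in V(H)$ by hypothesis, this gives $x \in S \cap V(H)$ or $y \in S \cap V(H)$. Since $xy$ was arbitrary, $S \cap V(H)$ covers every edge of $H$, which is exactly the definition of a vertex cover of $H$.

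For part~(2), I would take a vertex cover $S$ of $G$ of minimum size, so that $|S| = \beta(G)$. Applying part~(1), the set $S \cap V(H)$ is a vertex cover of $H$, and therefore by definition of $\beta(H)$ we have $\beta(H) \le |S \cap V(H)| \le |S| = \beta(G)$.

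The only subtlety worth guarding against is conflating ``induced subgraph'' with ``subgraph'': if $H$ were merely a subgraph, part~(1) would still be true since any edge of $H$ would still be in $G$, but the statement is cleanest in the induced case where $E(H) = \{xy \in E(G) : x,y \in V(H)\}$. No combinatorial difficulty arises, so there is no real obstacle; the proof is essentially a definition chase.
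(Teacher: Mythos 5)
Your proof is correct and is exactly the routine definition chase the authors had in mind; the paper omits the argument entirely, remarking only that the lemma is ``obvious'' from the definitions. Both parts are handled as intended, so nothing further is needed.
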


\begin{lem} \label{betaminus}
 Assume  $S$ is a vertex cover of $G$. If $U\subseteq S$, then
  \begin{enumerate}
\item $S\setminus U$ is a vertex cover of $G\setminus U$; and
\item  $\beta(G\setminus U) \le \beta(G)-|U|$.
\end{enumerate}
 \end{lem}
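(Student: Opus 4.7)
The plan is to peel back the definition of a vertex cover: part (1) is an immediate check, and part (2) then falls out by specializing $S$ to be a minimum vertex cover of $G$.

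For (1), I would take any edge $xy$ of $G \setminus U = G[V(G) \setminus U]$. By construction both endpoints lie outside $U$, and $xy$ is an edge of $G$. Since $S$ covers $G$, at least one of $x, y$ lies in $S$; being outside $U$, that endpoint in fact lies in $S \setminus U$. Hence every edge of $G \setminus U$ is covered by $S \setminus U$.

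For (2), I would apply (1) with $S$ chosen to be a minimum vertex cover of $G$, so that $|S| = \beta(G)$. Combining the two gives
$$\beta(G \setminus U) \le |S \setminus U| = |S| - |U| = \beta(G) - |U|,$$
as desired.

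There is essentially no obstacle to overcome; both claims are one-line consequences of the definition of vertex cover. The only delicate point is that (2) implicitly requires $U$ to be contained in some minimum vertex cover of $G$, which is the setting in which the lemma is meant to be invoked later in the paper.
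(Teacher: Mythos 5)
Your proof is correct, and it is exactly the argument the paper has in mind: the paper offers no proof at all, declaring this lemma (together with Lemma \ref{inducedbeta}) ``obvious'' from the definition of a vertex cover. Your closing caveat is also well taken and worth recording: as literally stated, part (2) can fail when $S$ is not a minimum cover (take $G=K_{1,3}$ with center $c$ and leaves $a,b,d$, let $S=\{a,b,d\}$ and $U=\{a,b\}$; then $\beta(G\setminus U)=1$ while $\beta(G)-|U|=-1$), so part (2) should be read with the additional hypothesis $|S|=\beta(G)$, which is precisely how the lemma is invoked in Theorems \ref{upperbound2} and \ref{CaWa}.
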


We conclude this section by giving a upper bound for $\m(G)$ in terms of $\beta(G)$.

\begin{thm}\label{upperbound2} Let $G$ be  a graph. Then, $\m(G) \le 2^{\beta(G)}$.
\end{thm}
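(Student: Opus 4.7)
The plan is to proceed by induction on $\beta(G)$. The base case $\beta(G)=0$ is immediate: the empty set is a vertex cover, so $G$ has no edges, hence $G$ is totally disconnected and $\m(G)=1=2^0$.

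For the inductive step, assume $\beta(G)\ge 1$. The natural move is to invoke Lemma \ref{HuTu}(1), which gives $\m(G)\le \m(G_x)+\m(G\setminus x)$ for any vertex $x$. To close the induction it would then be enough to exhibit a single vertex $x$ with
\[
\beta(G_x)\le \beta(G)-1 \quad\text{and}\quad \beta(G\setminus x)\le \beta(G)-1,
\]
since the induction hypothesis would then yield $\m(G)\le 2^{\beta(G)-1}+2^{\beta(G)-1}=2^{\beta(G)}$.

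The key observation, and the place where one has to be a bit careful, is that an arbitrary vertex $x$ need not make both covering numbers drop. I would therefore fix a minimum vertex cover $S$ of $G$ (which is nonempty as $\beta(G)\ge 1$) and choose $x\in S$. Then Lemma \ref{betaminus}(2) with $U=\{x\}$ gives $\beta(G\setminus x)\le \beta(G)-1$ at once. For the other term, Lemma \ref{inducedbeta}(1) shows that $S\cap V(G_x)$ is a vertex cover of the induced subgraph $G_x=G\setminus N_G[x]$; since $V(G_x)=V(G)\setminus N_G[x]$ and $x\in S\cap N_G[x]$, this cover has size at most $|S|-1$, so $\beta(G_x)\le \beta(G)-1$.

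The main obstacle, such as it is, is recognizing that picking $x$ to be a vertex in a minimum vertex cover (rather than an arbitrary vertex, or the endpoint of an arbitrary edge) is exactly what makes both reductions decrease $\beta$ by at least one, so that the recursion $\m(G)\le \m(G_x)+\m(G\setminus x)$ supplied by Lemma \ref{HuTu}(1) combines cleanly with the two covering-number lemmas to give the doubling behavior $2^{\beta(G)-1}+2^{\beta(G)-1}=2^{\beta(G)}$.
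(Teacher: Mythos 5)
Your proof is correct and follows essentially the same route as the paper: induction on $\beta(G)$, choosing $x$ from a minimum vertex cover $S$ and applying Lemma \ref{HuTu}(1). The only cosmetic difference is that you bound $\beta(G_x)\le\beta(G)-1$ directly (via Lemma \ref{inducedbeta}) and apply the induction hypothesis to $G_x$, whereas the paper instead uses Lemma \ref{W} to get $\m(G_x)\le\m(G\setminus x)$ and only needs the induction hypothesis for $G\setminus x$; both close the induction identically.
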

\begin{proof} We  prove the lemma by  induction on  $\beta(G)$. If $\beta(G)=0$, then $G$ is totally disconnected,  and so the assertion  is trivial.

Assume that $\beta(G)\geqslant 1$. Let $S$ be  a vertex cover  of $G$  such that $|S|=\beta(G)$. Let $x\in S$. By Lemma \ref{betaminus}, we have $\beta(G\setminus x)\le \beta(G)-1$. Hence,  $\m(G\setminus x) \le 2^{\beta(G\setminus x)}$ by the induction hypothesis.

Since $G_x$  is an induced subgraph of $G\setminus x$, $ \m(G_x)\le  \m(G\setminus x)$ by Lemma \ref{W}.  Together with Lemma \ref{HuTu}, we obtain
\begin{eqnarray*}
 \m(G)  &\le&  \m(G\setminus x) + \m(G_x)\\
  &\le&  2.\m(G\setminus x)   \le     2^{\beta(G\setminus x) +1} \le  2^{\beta(G)},
 \end{eqnarray*}
 as required.
 \end{proof}

\section{Extremal graphs}

A graph $G$ is called {\it bipartite} if its vertex set can be partitioned into two
subsets $A$ and $B$ so that every edge has one end in $A$ and one end in $B$; such a
partition is called a {\it bipartition} of the graph, and denoted by $(A, B)$.  If every vertex in $A$ is joined to every vertex in $B$ then $G$
is called a complete bipartite graph, which is denoted by $K_{|A|,|B|}$. A {\it star} is the complete bipartite graph $K_{1,m}$ ($m\ge 0$) consisting of $m + 1$ vertices. A {\it star triangle} is a graph joining some triangles at one common vertex.

Cameron and Walker  \cite{CW} gave a classification of the simple graphs $G$ with $\nu(G) = \nu_0(G)$; such graphs now are the so-called Cameron-Walker graphs (see \cite{HHKO}).

\begin{lem}{\rm (\cite[Theorem 1]{CW} or \cite[p.258]{HHKO})} \label{CaWa_classify} A  graph $G$ is Cameron-Walker      if and only if it is one of the following graphs:
\begin{enumerate}
\item a star;
\item a star  triangle;
\item a finite graph consisting of a connected bipartite graph with  bipartition $(A,B)$ such that there is at least one leaf edge attached to each vertex $i \in A$ and that  there may be possibly some pendant triangles attached to each vertex $j\in B$.
\end{enumerate}
\end{lem}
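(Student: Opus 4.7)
The plan is to show both directions by analyzing a maximum matching that is simultaneously a maximum induced matching. The easy direction is the ``if'': in each of the three listed graphs one exhibits an explicit induced matching of size $\nu(G)$ (the leaves plus one edge from each pendant triangle, etc.), which forces $\nu_0(G)=\nu(G)$. The substantive direction is the ``only if'', which I would approach as follows.

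Assume $G$ is connected with $\nu(G)=\nu_0(G)=r$, and fix a matching $M=\{x_1y_1,\ldots,x_ry_r\}$ that is simultaneously maximum as a matching and induced. Set $W:=V(M)$ and $T:=V(G)\setminus W$. Because $M$ is a maximum matching, $T$ is an independent set (otherwise an edge of $G[T]$ augments $M$), and because $G$ is connected, every vertex of $T$ has a neighbor in $W$. Handle the case $r=1$ separately: then any two edges of $G$ share a vertex, so $G$ is a star $K_{1,m}$ or a triangle (the latter being a degenerate star triangle).

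For $r\ge 2$, the core step is to control how $T$ attaches to the matched pairs. For each $i$, examine $N_G(T)\cap\{x_i,y_i\}$. If some $v\in T$ is adjacent to both $x_i$ and $y_i$, then $\{v,x_i,y_i\}$ is a triangle, and I would argue that every other neighbor of $v$ in $W$ lies in some pair $\{x_j,y_j\}$ as a similar triangle endpoint, else a short exchange produces either a matching of size $r+1$ or an edge between two matched pairs, violating induced-ness of $M$. Likewise, if $x_i$ has a neighbor in $T$ and $y_i$ has a (different) neighbor in $T$, an augmenting argument on $M$ gives a contradiction. These two observations let me color each matched pair: either one side carries only leaf pendants (this side will sit in $A$), or one side carries one or more pendant triangles (this side sits in $B$). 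Connectivity then forces every edge inside $W$ to go between the two chosen sides, producing the desired bipartition $(A,B)$ with leaves on $A$ and pendant triangles on $B$; the special situations where $A$ or $B$ collapses give the star and star-triangle cases.

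The main obstacle will be ruling out ``mixed'' configurations on a single matched edge $x_iy_i$, for instance where $x_i$ has both a pendant triangle and a long-range neighbor in some other pair. The cleanest way I can see is to argue that any such extra adjacency yields either a local swap $x_iy_i\rightsquigarrow x_iu,\,y_iv$ with $u,v\in T$ distinct (contradicting maximality of $M$) or a chord between two matched edges (contradicting $M$ being induced). Once this rigidity is established, assembling the bipartition and checking each piece against the three listed forms is routine, and the classification follows.
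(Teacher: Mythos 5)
First, note that the paper offers no proof of this lemma at all: it is imported verbatim from Cameron--Walker \cite{CW} (as corrected in \cite{HHKO}), so the only comparison available is with the literature. Your skeleton is in fact the standard one for \cite[Theorem~1]{CW}: fix $M=\{x_1y_1,\dots,x_ry_r\}$ that is simultaneously a maximum matching and an induced matching, observe that $T=V(G)\setminus V(M)$ is independent by maximality and that induced-ness kills all edges between distinct pairs, so every non-matching edge of $G$ runs between $V(M)$ and $T$; then exchange arguments classify each pair. The correct classification is: if some $v\in T$ is adjacent to both $x_i$ and $y_i$, then $v$ is the \emph{only} $T$-neighbour of the pair (two distinct $T$-neighbours of $x_i,y_i$ would give a matching of size $r+1$), so $\{v,x_i,y_i\}$ is a pendant triangle with apex $v\in T$; otherwise exactly one endpoint, say $y_i$, has no neighbour outside the pair and is a leaf, putting $x_i$ into $A$. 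This much of your plan is sound.

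There are, however, two genuine defects. First, your rigidity claim is wrong as stated: an apex $v\in T$ of a pendant triangle may perfectly well be adjacent to a single endpoint of another matched pair without forming a triangle there --- in the target structure $v\in B$ is adjacent to vertices of $A$, each of which is the non-leaf endpoint of a leaf pair. The correct claim constrains the \emph{pair's} other neighbours in $T$ (there are none), not the \emph{apex's} other neighbours in $V(M)$. Second, and more seriously, your assembly step places both sides of the bipartition inside $V(M)$ (``one side sits in $A$, the other in $B$'') and appeals to ``edges inside $W$ going between the two sides''. Since $M$ is induced there are no edges inside $V(M)$ other than $M$ itself, so this is vacuous; and in the actual structure $B$ consists of \emph{unmatched} vertices: $B=T$, $A$ is the set of non-leaf endpoints of the leaf pairs, the triangle pairs are the two outer vertices of pendant triangles hanging off $T$, and the connected bipartite graph of item (3) is $G[A\cup T]$. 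As written, your colouring cannot produce the bipartition $(A,B)$ of the statement. Finally, the ``if'' direction for graphs of type (3) needs the small counting argument that every matching edge consumes either a distinct vertex of $A$ or a distinct pendant triangle, so $\nu(G)\le |A|+(\text{number of triangles})$, which is exactly the size of your exhibited induced matching; you assert this without proof, though it is easy to supply.
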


\begin{exam}\label{exam1} Let $G$ be  Cameron-Walker  graph with $8$ vertices  in Figure 1. Then   $\nu(G)=2$ and
the maximal independent sets of $G$ are
$$ \{1,2,5,6,7,8\}; \{3,4\}; \{3,5,6\}; \{4,7,8\} $$
Hence,  $\m(G)=4 $.
\begin{center}
\includegraphics[scale=0.6]{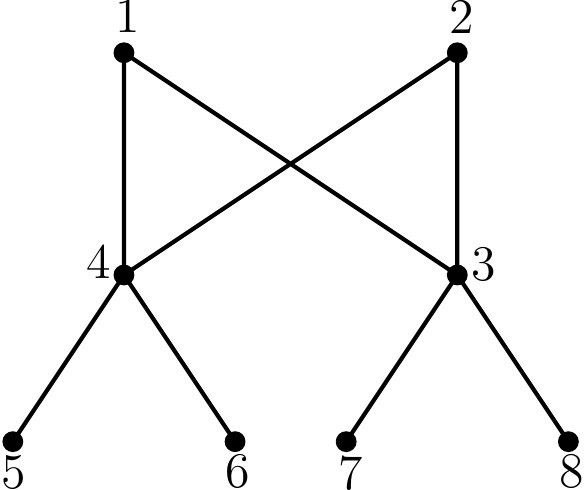}\\
{Figure 1.}
\end{center}
\end{exam}

\begin{thm}\label{CaWa} Let $G$ be a  graph. Then $\m(G) = 2^{\beta(G)}$ if and only if
   $G$ is   a Cameron-Walker  bipartite graph.
\end{thm}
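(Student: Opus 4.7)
The plan is to handle the two directions separately, with sufficiency being short and necessity proved by induction on $|V(G)|$.

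\emph{Sufficiency.} If $G$ is Cameron-Walker bipartite, then the Cameron-Walker condition gives $\nu(G)=\nu_0(G)$ and K\"onig's theorem for bipartite graphs gives $\beta(G)=\nu(G)$, so $\nu_0(G)=\beta(G)$. Combining Lemma~\ref{lowerbound} with Theorem~\ref{upperbound2} yields $2^{\beta(G)}=2^{\nu_0(G)}\le \m(G)\le 2^{\beta(G)}$, whence equality.

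\emph{Necessity.} Induct on $|V(G)|$. Two reductions dispose of easy cases: if $G$ has an isolated vertex $v$, pass to $G\setminus v$ (same $\m$ and $\beta$); and if $G$ is a disjoint union of nontrivial components, multiplicativity of $\m$ and additivity of $\beta$ force equality on each component, and the inductive hypothesis applies componentwise. Assume henceforth that $G$ is connected, has no isolated vertex, and $\beta(G)\ge 1$. The case $\beta(G)=1$ is immediate, since then $G$ is a star. So assume $\beta(G)\ge 2$, pick $x$ in a minimum vertex cover $S$, and revisit the chain of inequalities in the proof of Theorem~\ref{upperbound2}: the equality $\m(G)=2^{\beta(G)}$ forces each step to be tight, giving $\beta(G\setminus x)=\beta(G)-1$, $\m(G\setminus x)=\m(G_x)=2^{\beta(G)-1}$, and $\m(G)=\m(G\setminus x)+\m(G_x)$. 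The last identity is equivalent to saying that every maximal independent set of $G\setminus x$ meets $N_G(x)$. The inductive hypothesis applied to $G\setminus x$ and $G_x$ then shows both are Cameron-Walker bipartite.

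The next step is to produce a leaf of $G$ itself. Since $G\setminus x$ is Cameron-Walker bipartite with $\beta\ge 1$, Lemma~\ref{CaWa_classify} supplies a leaf of $G\setminus x$. If $G$ itself has no leaf, then all leaves of $G\setminus x$ must lie in $N_G(x)$ (to reach degree at least $2$ in $G$); combining this with the requirement that some nontrivial component of $G\setminus x$ has every maximal independent set meeting $N_G(x)$ forces $x$ to be adjacent both to a leaf $l$ and to its unique neighbor $u$ in $G\setminus x$, yielding a triangle $\{x,l,u\}$. A careful refinement of the counting in Lemma~\ref{HuTu}---applied simultaneously at $x$ and at $u$, in conjunction with the already-known Cameron-Walker bipartite structure of $G\setminus x$ and $G_x$---contradicts $\m(G)=2^{\beta(G)}$. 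Once a leaf $l$ of $G$ with neighbor $y$ is secured, Lemma~\ref{HuTu}(2) gives $\m(G)=\m(G_l)+\m(G_y)$; noting $y$ lies in some minimum vertex cover ($l$ being a leaf), we rerun the equality analysis with $y$ in place of $x$: both $G_l=G\setminus\{l,y\}$ and $G_y=G\setminus N_G[y]$ are Cameron-Walker bipartite by the inductive hypothesis. Their bipartitions then lift to a bipartition $(A,B)$ of $G$ with $y\in A$, $l\in B$, and every vertex of $A$ possessing a leaf neighbor in $B$, placing $G$ in class~(3) of Lemma~\ref{CaWa_classify}.

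The main obstacle is the ``$G$ has a leaf'' step: converting the derived triangle $\{x,l,u\}$ into the strict inequality $\m(G)<2^{\beta(G)}$ requires a delicate local counting argument that I expect to be the technical heart of the proof.
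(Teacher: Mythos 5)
Your sufficiency argument coincides with the paper's. The necessity direction, however, contains two genuine gaps. The first is the step you yourself flag: to rule out the no-leaf case you derive a triangle $\{x,l,u\}$ and then state that converting it into $\m(G)<2^{\beta(G)}$ ``requires a delicate local counting argument'' which you do not supply — but that is exactly where the proof must do real work. Moreover, the derivation of the triangle is itself unjustified: from ``$G\setminus x$ is Cameron--Walker bipartite and all of its leaves lie in $N_G(x)$'' one cannot conclude $xu\in E(G)$; the $4$-cycle (with $G\setminus x$ a path $a\!-\!b\!-\!c$ and $x$ adjacent to $a,c$) realizes all the structural facts you cite with no triangle, so the implication can only come from the tightness condition used in a way you have not made precise. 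The paper sidesteps this entire branch with one short observation (its Claim~1): a minimum vertex cover $S$ must be independent, because if $x,y\in S$ were adjacent then $S\cap V(G_x)\subseteq S\setminus\{x,y\}$ gives $\beta(G_x)\le\beta(G)-2$, whence $\m(G)\le\m(G_x)+\m(G\setminus x)\le 2^{\beta(G)-2}+2^{\beta(G)-1}<2^{\beta(G)}$. Since any triangle forces two mutually adjacent vertices into $S$, this makes $G$ bipartite at the outset and eliminates your hardest case for free.

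The second gap is in your final paragraph. Class~(3) of Lemma~\ref{CaWa_classify} requires a pendant leaf at \emph{every} vertex of the side $A$ of the bipartition, whereas you produce a single leaf $l$ at a single vertex $y$; nothing in your argument supplies leaves at the remaining vertices of $A$. In addition, the assertion that the bipartitions of $G\setminus\{l,y\}$ and $G_y$ ``lift'' to a bipartition of $G$ needs proof: you must show all neighbors of $y$ lie on one side, i.e.\ that $G$ has no odd cycle through $y$, which again is exactly what the independence of $S$ delivers. The paper closes both gaps with its Claim~2: for every $U\subseteq S$ the localization $G_U$ satisfies $\m(G_U)=2^{\beta(G_U)}$ with $\beta(G_U)=\beta(G)-|U|$; taking $U=S\setminus\{x\}$ makes $G_U$ a star centered at $x$ plus isolated vertices, and the independence of $V(G)\setminus S$ upgrades a leaf of that star to a leaf of $G$ attached to $x$ — simultaneously for every $x\in S$. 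You would need arguments of comparable strength in place of your last two paragraphs for the proposal to be complete.
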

\begin{proof} If $G$ is a  Cameron-Walker bipartite graph, then $\nu_0(G)=\nu(G) = \beta(G)$. Together with Lemma \ref{lowerbound} and Theorem \ref{upperbound2}, this fact yields $\m(G) =  2^{\beta(G)}$.

Conversely, assume that $\m(G) = 2^{\beta(G)}$. We will prove that $G$ is    Cameron-Walker bipartite by induction on $\beta(G)$.

If $\beta(G)=0$, then $G$ is totally disconnected and so the assertion is trivial.   If $\beta(G)=1$, then $G$ is a union of a star and isolated vertices. In this case, $G$ is a Cameron-Walker bipartite graph by Lemma $\ref{CaWa_classify}$.

Assume that  $\beta(G)\ge 2$.  Let $S$ be a  minimal vertex cover of $G$  such that $|S|= \beta(G)$. We first prove two following claims.

\medskip
\noindent{\it Claim 1:} $S$ is an independent set of $G$.

Indeed, assume on the contrary that   there would be an edge, say $xy$, with $x,y\in S$. By Lemma $\ref{inducedbeta}$,  $S \cap V(G_x)$ is a vertex cover of $G_x$. Since $S \cap V(G_x)\subseteq S\setminus \{x,y\}$, we deduce that
$$\beta(G_x) \leqslant |S|-2 =\beta(G)-2.$$
Similarly, $S\setminus\{x\}$ is a vertex cover of $G\setminus x$. Thus $\beta(G\setminus x) \leqslant \beta(G)-1$.

Together those inequalities with Lemma $\ref{HuTu}$ and Theorem $\ref{upperbound2}$, we have
$$\m(G) \leqslant \m(G_x)+\m(G\setminus x) \leqslant 2^{\beta(G)-2} + 2^{\beta(G)-1} < 2^{\beta(G)}.$$
This inequality contradicts our assumption. Therefore, $S$ is an independent set of $G$.

\medskip
\noindent{\it Claim 2:} $\m(G_U) = 2^{\beta(G_U)}$ and $\beta(G_U) = \beta(G)-|U|$ for any $U\subseteq S$.

Indeed, we prove the claim by the induction on $|U|$.  If $|U| = 0$, i.e., $U$ is empty, then there is nothing to prove.

If  $|U|=1$, then $U=\{x\}$ for some vertex $x$. Since $x\in S$, by Lemmas \ref{inducedbeta} and \ref{betaminus}, we have $\beta(G_x)\le\beta(G\setminus x) \le   \beta(G)-1$.  By  Theorem \ref{upperbound2}, $\m(G\setminus x)\le 2^{\beta(G\setminus x)}$ and    $\m(G_x)\le 2^{\beta(G_x)}$. Together these inequalities with equality $\m(G) = 2^{\beta(G)}$, Lemma $\ref{HuTu}$ gives
\begin{eqnarray*}
2^{\beta(G)}=\m(G) &\le& \m(G\setminus x) +\m(G_x)  \le   2^{\beta(G\setminus x)} + 2^{\beta(G_x)}\\
&\le & 2^{\beta(G)-1}+2^{\beta(G)-1}  \le  2^{\beta(G)}.
\end{eqnarray*}
Hence, $\m(G_x) =   2^{\beta(G_x)}$ and $ \beta(G_x) = \beta(G)-1$, and the claim  holds  in this case.

We now assume $|U|\ge 2$. Let $x\in U$ and let  $T:=U\setminus \{x\}$.  Note that  $T$ is a nonempty independent set of $S$ and $|T|=|U|-1$. By the induction hypothesis of our claim, $\m(G_T)=2^{\beta(G_T)}$ and $\beta(G_T)=\beta(G)-|T|$.

Note that, by Claim 1,  $S$ is an independent set of $G$. Thus $S\setminus T =  S\setminus N_G[T]$. By Lemma $\ref{inducedbeta}$, $S\setminus T$ is a vertex cover of $G_T$. Since $x\in S\setminus T$, by the same argument in the inductive step of our claim with $G_T$ replacing by $G$, we have $\m((G_T)_x) = 2^{\beta((G_T)_x)}$ and $\beta((G_T)_x) = \beta(G_T)-1$.

Since $G_U = (G_T)_x$, we obtain $\m(G_U) = 2^{\beta(G_U)}$ and $$\beta(G_U) = \beta(G_T)-1 = \beta(G) -(|T|+1) = \beta(G)-|U|,$$ as claimed.

\medskip

We turn back to the proof of  the theorem. By Claim 1,  $S$ is both a vertex cover and an independent set of $G$. Therefore  $G$ is a bipartite graph with bipartition $(S,V(G)\setminus S)$. It remains to prove $G$ is a Cameron-Walker graph.

 For each $x\in S$, let $U:=S\setminus \{x\}$. By Claim 2, $\beta(G_U) = \beta(G)-|U|=1$. Hence, $G_U$ is a union of  a star with bipartition $(\{x\}, Y)$, where  $\emptyset \ne Y\subseteq V(G)\setminus S$ and isolated vertices.   Thus, there is a vertex $y\in Y$ such that $\deg_{G_U}(y)=1$ and $xy\in E(G)$. Since $V(G)\setminus S$ is an independent set, the equality $\deg_{G_U}(y)=1$ forces $\deg_G(y)=1$. By using  Lemma $\ref{CaWa_classify}$, we conclude that $G$ is a Cameron-Walker graph, and the proof is complete.
\end{proof}

If $G$ is a K\"onig-Egerv\'ary graph, then $\beta(G) = \nu(G)$. Together Theorems $\ref{upperbound2}$ and $\ref{CaWa}$, this fact yields.

\begin{cor}\label{Konig} Let $G$ be a K\"onig-Egerv\'ary graph. Then
$$\m(G) \le 2^{\nu(G)},$$
and the equality  holds  if and only if $G$ is a Cameron-Walker bipartite graph.
\end{cor}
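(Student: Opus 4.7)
The plan is to deduce Corollary \ref{Konig} as an immediate consequence of the two main results already in place, using only the defining equality of a K\"onig-Egerv\'ary graph. By definition, if $G$ is K\"onig-Egerv\'ary then $\beta(G) = \nu(G)$, so the corollary is really just a translation of the two theorems from the language of $\beta(G)$ into the language of $\nu(G)$.

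First, for the inequality, I would simply apply Theorem \ref{upperbound2} to the K\"onig-Egerv\'ary graph $G$. This gives $\m(G) \le 2^{\beta(G)}$ unconditionally, and substituting $\beta(G)=\nu(G)$ yields the desired bound $\m(G) \le 2^{\nu(G)}$.

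For the equality characterization, I would argue in both directions using Theorem \ref{CaWa}. If $\m(G) = 2^{\nu(G)}$, then since $\nu(G) = \beta(G)$ this is the same as $\m(G) = 2^{\beta(G)}$, and Theorem \ref{CaWa} forces $G$ to be a Cameron-Walker bipartite graph. Conversely, if $G$ is a Cameron-Walker bipartite graph, then $G$ is bipartite and therefore K\"onig-Egerv\'ary by \cite[Theorem 8.32]{BM}, so $\beta(G) = \nu(G)$; moreover, Theorem \ref{CaWa} gives $\m(G) = 2^{\beta(G)} = 2^{\nu(G)}$, as required. This also confirms that the extremal class genuinely lies inside the family of K\"onig-Egerv\'ary graphs considered in the corollary.

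There is no real obstacle here: the corollary is purely a repackaging of Theorem \ref{upperbound2} and Theorem \ref{CaWa} via the identity $\beta(G) = \nu(G)$. The only point worth explicitly recording is the consistency check that Cameron-Walker bipartite graphs are K\"onig-Egerv\'ary, which follows from the standard fact that all bipartite graphs are K\"onig-Egerv\'ary.
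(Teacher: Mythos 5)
Your proof is correct and takes essentially the same route as the paper, which also derives the corollary immediately from Theorem \ref{upperbound2} and Theorem \ref{CaWa} via the identity $\beta(G)=\nu(G)$. The extra consistency check that Cameron--Walker bipartite graphs are K\"onig--Egerv\'ary is a nice touch but not needed beyond what the paper already records.
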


\subsection*{Acknowledgment} This work is partially supported by NAFOSTED (Vietnam), Project  101.04-2015.02.

 \end{document}